\theoremstyle{plain}
\newtheorem{theorem}{Theorem}[section]
\newtheorem{lemma}[theorem]{Lemma}
\newtheorem{definition}[theorem]{Definition}
\newtheorem{remark}[theorem]{Remark}
\theoremstyle{remark}
\title{Tame torsion, the tame inverse Galois problem, and endomorphisms}
\author{Matthew Bisatt}
\address{Fry Building, University of Bristol, Bristol, BS8 1UG, UK}
\email{matthew.bisatt@bristol.ac.uk}
\date{\today}
\begin{document}
\global\long\def\OF{\mathcal{O}_F}
\global\long\def\cK{\mathcal{K}}
\global\long\def\cR{\mathcal{R}}
\global\long\def\cC{\mathcal{C}}
\global\long\def\cJ{\mathcal{J}}
\global\long\def\Q{\mathbb{Q}}
\global\long\def\C{\mathbb{C}}
\global\long\def\qp{\mathbb{Q}_p}
\global\long\def\Z{\mathbb{Z}}
\global\long\def\fp{\mathbb{F}_p}
\global\long\def\Jac{\operatorname{Jac}}
\global\long\def\End{\operatorname{End}}
\global\long\def\Aut{\operatorname{Aut}}
\global\long\def\Gal{\operatorname{Gal}}
\global\long\def\GSp{\operatorname{GSp}}
\global\long\def\Sp{\operatorname{Sp}}
\global\long\def\im{\operatorname{Im}}
\global\long\def\AJ{\operatorname{AJ}}

\begin{abstract}
	Fix a positive integer $g$ and rational prime $p$. We prove the existence of a genus $g$ curve $C/\Q$ such that the mod $p$ representation of its Jacobian is tame by imposing conditions on the endomorphism ring. As an application, we consider the tame inverse Galois problem and are able to realise general symplectic groups as Galois groups of tame extensions of $\Q$.
\end{abstract}

\maketitle

\section{Introduction}

Let $C/\Q$ be a nice (i.e. smooth, projective and geometrically integral) curve of genus $g$ with Jacobian $J_C$. Fix a prime $p$. Then it is well known that the subgroup of points of $J_C$ of order dividing $p$, $J_C[p]$, defines a finite Galois extension $\Q(J_C[p])/\Q$. We would like to control the ramification of this extension as much as possible. 

We say that a number field $F$ is \emph{tame} (resp. \emph{unramified}) if $F/\Q$ is tamely ramified (resp. \emph{unramified}) at every finite prime of $F$. Throughout, we will always write $\zeta_m$ for a primitive $m$-th root of unity.

\begin{remark}
Let $\zeta_p$ be a primitive $p$-th root of unity. Recall that by the Weil pairing, we have $\Q(\zeta_p) \subset \Q(J_C[p])$. If $p$ is odd, then $\Q(\zeta_p)/\Q$ is (tamely) ramified at $p$ and hence $\Q(J_C[p])$ cannot be unramified over $\Q$.\footnote{In fact, the only unramified number field is $\Q$ by the theorem of Hermite--Minkowski.}

If $p=2$, then we can ensure that $\Q(J_C[2])=\Q$ (which is unramified), for example by using the genus $g$ hyperelliptic curve $C: y^2=\prod\limits_{j=0}^{2g}(x-j)$.
\end{remark}

For our first result, we address the question of whether the extension $\Q(J_C[p])$ can be tame.

\begin{theorem}
\label{mainthm}
	Fix a positive integer $g$ and rational prime $p$. Then there exists a nice curve $C/\Q$ of genus $g$ such that $\Q(J_C[p])/\Q$ is tame.
\end{theorem}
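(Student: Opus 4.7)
The plan is to reduce tameness of $\Q(J_C[p])/\Q$ to local conditions at each bad prime and at $p$, and then to produce $C$ by imposing extra endomorphisms on its Jacobian that force the mod $p$ image into a subgroup whose order is well controlled.

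By N\'eron--Ogg--Shafarevich the extension is unramified at every prime of good reduction distinct from $p$, so tameness localises at $p$ and the primes of bad reduction. At a bad prime $\ell\neq p$, semistable reduction of $J_C$ already suffices: inertia at $\ell$ then acts unipotently on $T_p J_C$ by Grothendieck's monodromy theorem, so its image in $\operatorname{GL}(J_C[p])$ is a $p$-group, into which the pro-$\ell$ wild inertia can only map trivially. At $p$ itself, the cleanest sufficient condition is good supersingular reduction of $J_C$: the inertia image on $J_C[p]$ is then controlled by fundamental characters valued in $\overline{\fp}^{\times}$, and in particular has order coprime to $p$. It therefore suffices to exhibit, for each $g$ and $p$, a genus $g$ curve $C/\Q$ that is semistable outside $p$ and has good supersingular reduction at $p$.

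For $g=1$ this is classical. For general $g$ I would adopt the endomorphism strategy of the abstract by taking a superelliptic cover $C\colon y^n=f(x)$, whose cyclic automorphism $(x,y)\mapsto(x,\zeta_n y)$ embeds $\Z[\zeta_n]$ into $\End(J_C)$. The integer $n$ and $\deg f$ are tuned via Riemann--Hurwitz so that the genus of $C$ equals $g$. The integer $n$ is chosen so that $p$ is inert in $\Z[\zeta_n]$ (or, if needed, in a suitable summand of the decomposition of $J_C$ under the $\Z[\zeta_n]$-action), which via a Shimura--Taniyama-type criterion on the CM type forces supersingular reduction of $J_C$ at $p$; the mod $p$ Galois image then factors through a subgroup of $(\Z[\zeta_n]/p)^{\times}$ (or a normalizer thereof), a group of order prime to $p$, yielding the required tameness at $p$. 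Semistability at the remaining bad primes is arranged by choosing the coefficients of $f$ so that $f$ has distinct roots modulo each such $\ell\nmid n$.

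The main difficulty is meeting the three conditions simultaneously: the cyclotomic endomorphism structure, the exact genus $g$, and semistability outside $p$. Without an endomorphism hypothesis one would have to control the full $\GSp_{2g}(\fp)$-image, an intractable global task; imposing the endomorphism collapses the image into the much smaller cyclotomic unit group, after which global tameness reduces to a straightforward per-prime verification on an explicit superelliptic model.
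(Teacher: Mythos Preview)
Your high-level instinct---force enough endomorphisms so that the mod $p$ image lands in a group of order prime to $p$---matches the paper's, but the execution has a genuine gap: you never explain how to meet all the local conditions \emph{simultaneously} with a single global curve.

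Concretely, your model $y^n=f(x)$ automatically has bad reduction at every prime $\ell\mid n$, and nothing you say about the coefficients of $f$ touches those primes; nor is that reduction semistable in general, so your argument for $\ell\neq p$ breaks there. The same issue afflicts the paper's CM curve $y^2=x^n-1$. The paper resolves this by a device you are missing entirely: Kisin's local constancy theorem (Theorem~\ref{kisin}), which says that the $G_{\Q_\ell}$-module $J_C[p]$ depends only on $f\bmod\ell^N$. This decouples the primes: one uses the CM curve $y^2=x^n-1$ only at $p$, and unrelated curves with good reduction at each small $\ell\neq p$, and then glues by the Chinese remainder theorem. Without this step your sketch is not a proof.

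Two smaller points. First, the detour through supersingular reduction and Shimura--Taniyama is unnecessary: once $A$ has CM by $\OF$ with all endomorphisms defined over a tame extension $\cK$, the mod $p$ image already lies in $(\OF\otimes\fp)^\times$, which has order prime to $p$ whenever $p$ is unramified in $F$ (Theorem~\ref{image} and Lemma~\ref{decomp}). Second, your cyclotomic action on $y^n=f(x)$ gives \emph{full} CM only when $\varphi(n)=2g$, which misses most $g$; you gesture at ``a suitable summand'' but do not work it out. The paper instead takes the hyperelliptic curve $y^2=x^n-1$ with $n\in\{2g+1,2g+2\}$ and $p\nmid n$, whose Jacobian has CM by the $2g$-dimensional \'etale algebra $\prod_{d\mid n,\,d>2}\Z[\zeta_d]$ (Theorem~\ref{end}), hitting every genus directly. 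Finally, for $\ell\neq p$ the paper does not argue via semistability: it imposes good reduction at the finitely many primes $\ell\le 2g+1$ and invokes a Serre--Tate bound to get tameness for free once $\ell>2g+1$.
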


As an application of this result, we study the tame inverse Galois problem. Recall that the classical inverse Galois problem asks if every finite group $G$ is (abstractly) realisable as a Galois group $\Gal(F/\Q)$ for some Galois extension $F/\Q$. The tame variant (due to Birch \cite[p.35]{Bir94}) further asks if we can moreover impose that $F$ is tame. We consider the tame version for the case $G=\GSp_{2g}(\fp)$, $p$ odd.

This is known when $g\!=\!1$ (all $p$) and~$g\!=\!2$ ($p \geqslant 5$) thanks to the work of Arias-de-Reyna--Vila \cite[Theorem 1.2]{AV09}, \cite[Theorem 5.3]{AV11}. Recent work of the author with T. Dokchitser \cite[Theorem 6.7]{BD19} strengthens this result to hold for any genus $g$ and odd prime $p$, dependent on a variant of Goldbach's conjecture; in particular it holds for $g \leqslant 10^7$ and any odd $p$ by computational verification.

Our second result is an unconditional version of this but the method of proof is less explicit so would not enable one to construct such a curve directly.

\begin{theorem}[=Theorem \ref{tameIGP}]
\label{tamethm}
	Fix a positive integer $g$ and odd prime $p$. There exists a nice curve $C/\Q$ of genus $g$ such that the extension $\Q(J_C[p])/\Q$ is tame, and $\Gal(\Q(J_C[p])/\Q) \cong \GSp_{2g}(\fp)$.
\end{theorem}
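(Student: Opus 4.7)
The plan is to leverage Theorem \ref{mainthm} as the starting point and then move within a family to replace the rigid endomorphism conditions (which constrain the Galois image to a proper subgroup of $\GSp_{2g}(\fp)$) with the condition that the image is all of $\GSp_{2g}(\fp)$. Concretely, Theorem \ref{mainthm} provides a curve $C_0/\Q$ with tame $\Q(J_{C_0}[p])/\Q$; the local inertia images at every prime are fixed by the reduction data of $C_0$ (good ordinary reduction at $p$, and controlled, tame reduction types at the primes of bad reduction). These are the inputs I would preserve under deformation.

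First, I would assemble a $\Q$-rational family $\mathcal{C} \to U$ of genus $g$ curves all sharing the local reduction data of $C_0$ at the finitely many primes relevant to tameness. Since tameness is a purely local statement about inertia at each prime, any $\Q$-fibre over $U$ will again satisfy $\Q(J_C[p])/\Q$ tame. Concretely one can fix level structure at the bad primes and at $p$, or parametrize curves of a prescribed stable-reduction type, and apply Artin approximation / descent to make the family defined over $\Q$.

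Second, I would show that within this family the generic mod $p$ monodromy surjects onto $\GSp_{2g}(\fp)$. The constraint imposed by the Weil pairing places the image in $\GSp_{2g}(\fp)$; by a Hilbert irreducibility argument applied to $U$, together with standard irreducibility/big-image results for mod $p$ Galois representations of Jacobians varying in a positive-dimensional family, the set of $t \in U(\Q)$ for which $\rho_{C_t,p}$ fails to be surjective onto $\GSp_{2g}(\fp)$ is a thin subset. Taking any $t \in U(\Q)$ outside this thin set then produces $C = \mathcal{C}_t$ satisfying both conclusions of the theorem.

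The main obstacle is the first step: Theorem \ref{mainthm} is purely existential and, as the paper notes, is not directly constructive, so one has to argue that the tameness-preserving local conditions are flexible enough to define a positive-dimensional moduli problem with a $\Q$-point (namely $C_0$ itself). This requires verifying, prime-by-prime, that the local inertia at each $\ell$ (and especially at $\ell = p$, where the interaction between wild inertia and $p$-torsion is most delicate) is an open condition on the appropriate local deformation space. Once the family exists, the second step reduces to a standard Hilbert irreducibility argument and is comparatively routine.
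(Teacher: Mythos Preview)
Your overall strategy---embed the problem in a family and apply Hilbert irreducibility---matches the paper's, but your execution diverges at a point where the paper's route is both simpler and avoids a real gap in yours. You propose restricting to a family $\mathcal{C}\to U$ in which tameness is built in by fixing local reduction data, and then asserting that this restricted family still has full $\GSp_{2g}(\fp)$ monodromy via ``standard big-image results.'' There is no such blanket result: a positive-dimensional family of genus $g$ curves can have monodromy strictly smaller than $\GSp_{2g}$ (e.g.\ families with extra endomorphisms, or families lying in a proper Shimura subvariety). You would need to prove full monodromy for your specific family, and since that family is defined by local conditions at $p$ and at several $\ell$, this is not automatic. (Incidentally, the tameness at $p$ in Theorem~\ref{mainthm} comes from CM structure, not from good ordinary reduction as you suggest.)

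The paper sidesteps this entirely. It works with the \emph{full} family of degree-$(2g+2)$ hyperelliptic curves parametrised by $\mathbb{A}^{2g+3}$, for which surjectivity of the mod $p$ monodromy onto $\GSp_{2g}(\fp)$ is a theorem of A'Campo. Hilbert irreducibility then shows that the integral points with full image have density~$1$ in $\Z^{2g+3}$. The key observation you are missing is that the \emph{proof} of Theorem~\ref{mainthm} (not just its statement) shows that tameness is imposed by \emph{congruence conditions} on the coefficients of $f$, via Kisin's local constancy (Theorem~\ref{kisin}): there is a single residue class modulo some $M$ in $\Z^{2g+3}$ all of whose lifts yield tame $\Q(J[p])$. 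This is a positive-density set, and intersecting it with the density-$1$ set of surjective specialisations gives the result. So rather than shrinking the family to force tameness and then worrying about its monodromy, one keeps the full family and simply notes that both conditions are dense enough to meet.
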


\begin{remark}
	Before we start our proofs, we note that is also an alternative proof in \cite{BD19} to Theorem \ref{mainthm} with an identical approach for $\ell \neq p$ to force $\Q(J[p])$ to be tame. However the approach for $\ell=p$ is substantially different; they use the theory of Mumford curves whereas here we will impose restrictions on the endomorphism algebra. Moreover, they treat the case $p=2$ separately which is not necessary here.
\end{remark}

\noindent \textbf{Layout of the paper.} In \S \ref{localS}, we give Kisin's result on local constancy in $p$-adic families which enables us to reduce our global problem to a local problem, and provide a proof assuming the endomorphism ring of a particular family of curves. In \S \ref{CMS}, we recap the theory of abelian varieties with complex multiplication and explain how this enables us to ensure tame ramification at $p$. We then give an explicit family of curves whose Jacobians have suitable endomorphism rings in \S \ref{EndS} which completes the proof of Theorem \ref{mainthm}. Finally in \S5, we turn our attention to the inverse Galois problem and give a proof of Theorem \ref{tamethm}.

\vspace*{5pt}

\noindent \textbf{Acknowledgements.} I would like to thank Vladimir Dokchitser for suggesting this approach and helpful conversations. I am also indebted to the referee for suggesting the proof of Theorem \ref{tameIGP}.

\section{Local strategy}
\label{localS}

We now discuss conditions to ensure that $\Q(J_C[p])/\Q$ is tame. Our approach will be to impose some local conditions on $C$ in order to guarantee tameness at each prime and then apply the following result of Kisin which allows us to amalgamate these local conditions using the Chinese remainder theorem.

\begin{theorem}
\label{kisin}
Let $\ell$ be a prime. Let $C_f: y^2=f(x)$ be a hyperelliptic curve, with $f\in\Z_{\ell}[x]$ squarefree. For every $m\geqslant 1$, there exists $N\geqslant 1$ such that if $\tilde f\equiv f\mod{\ell^N}$ and $\deg(f)=\deg(\tilde f)$ then $C_{\tilde f}: y^2=\tilde f(x)$ is a hyperelliptic curve with $$\Jac(C_{\tilde f})[m] \cong \Jac(C_f)[m]$$ as $\Gal(\overline{\Q_{\ell}}/\Q_{\ell})$-modules.
\end{theorem}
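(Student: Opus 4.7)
The plan is to realise $\Jac(C_f)[m]$ and $\Jac(C_{\tilde f})[m]$ as two specialisations of a single $\ell$-adic analytic family of Galois representations indexed by the coefficients of $f$, and to invoke local constancy of such families. Let $d = \deg f$ and let $U \subset \mathbb{A}^{d+1}_{\Q_\ell}$ be the open subscheme parameterising squarefree polynomials of degree $d$; over $U$ the equation $y^2 = f_a(x)$ defines a smooth proper relative curve $\mathcal{C} \to U$ of genus $g = \lfloor (d-1)/2 \rfloor$ whose Jacobian $\cJ \to U$ is a relative abelian scheme. Both $f$ and $\tilde f$ give $\Q_\ell$-points of $U$, and we wish to show $\cJ[m]_f \cong \cJ[m]_{\tilde f}$ as $\Gal(\overline{\Q_\ell}/\Q_\ell)$-modules whenever $f \equiv \tilde f \pmod{\ell^N}$ for $N$ large. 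By the Chinese Remainder Theorem I would split $m = m' \ell^n$ with $\gcd(m', \ell) = 1$ and treat each factor separately.

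For the prime-to-$\ell$ factor $m'$, the scheme $\cJ[m'] \to U$ is finite étale. Choosing $N$ large enough that the $\ell$-adic polydisc $\Delta = \{a : v_\ell(a_i - f_i) \geqslant N\}$ lies in $U^{\mathrm{rig}}$ (so avoids the discriminant locus), smooth and proper base change for $\mathcal{C} \to U$ identifies $\cJ[m']$ restricted to $\Delta$ with a Galois-equivariant local system whose monodromy along $\Delta$ is trivial, yielding canonical isomorphisms $\cJ[m']_f \cong \cJ[m']_{\tilde f}$ compatible with the $\Gal(\overline{\Q_\ell}/\Q_\ell)$-action. Concretely, one can also verify this via spreading $\cJ[m']$ to a finite étale group scheme over an integral model and using Henselian rigidity of such covers modulo high powers of $\ell$.

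The main obstacle is the $\ell$-power factor $m = \ell^n$: here $\cJ[\ell^n]$ is no longer étale and the Galois representation may exhibit wild ramification, so the previous paragraph's arguments are unavailable. I would invoke Kisin's local constancy theorem for $\ell$-adic Galois representations in $p$-adic analytic families, whose proof uses integral $p$-adic Hodge theory: one attaches to each fibre $\cJ_f[\ell^n]$ a Breuil--Kisin module (equivalently, an étale $(\varphi, \Gamma)$-module) that determines the Galois representation and whose isomorphism class varies $\ell$-adically continuously in $f$, so sufficient proximity of $\tilde f$ to $f$ forces the associated modules to be congruent mod $\ell^n$, giving an isomorphism of Galois representations on generic fibres. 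Combining both factors via the Chinese Remainder Theorem yields the required $N$.
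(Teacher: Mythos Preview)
Your proposal is essentially correct and, at its core, takes the same route as the paper: both ultimately appeal to Kisin's local constancy theorem \cite[Theorem~5.1(1)]{Kis99} for the $\ell$-power part of the torsion, which is the only genuinely difficult step. The paper simply cites this result as a black box (adding only the observation that large $N$ keeps $\tilde f$ squarefree), whereas you unpack the argument by separating $m = m'\ell^n$ and handling the prime-to-$\ell$ factor via the elementary fact that $\cJ[m']\to U$ is finite \'etale (hence locally constant on a small polydisc). This decomposition is a reasonable expository refinement but does not change the substance: the \'etale case is the easy one, and for $\ell^n$-torsion you are still invoking exactly the result the paper cites. One small caution: your description of the mechanism behind Kisin's theorem via ``Breuil--Kisin modules'' is slightly anachronistic for the 1999 paper, which works with Fontaine's \'etale $(\varphi,\Gamma)$-modules; your parenthetical already acknowledges this equivalence, so the point is cosmetic rather than mathematical.
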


\begin{proof}
This is a special case of \cite[Theorem 5.1(1)]{Kis99}. Note that for $N$ large enough, all $\tilde f\equiv f\mod{\ell^N}$ are squarefree, and so define a $\ell$-adic family of hyperelliptic curves of the same genus.
\end{proof}

We begin with giving the local conditions away from $p$.

\begin{lemma}
\label{local}
	Fix a rational prime $p$. Let $C/\Q$ be a genus $g$ curve with Jacobian $J_C$ such that:
	\begin{enumerate}
		\item $C$ has good reduction at all $\ell \leqslant 2g+1, \ell \neq p$;
		\item $\Q_p(J_C[p])/\Q_p$ is tamely ramified. 
	\end{enumerate}
	Then $\Q(J_C[p])/\Q$ is tame.
\end{lemma}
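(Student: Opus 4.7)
The condition that $\Q(J_C[p])/\Q$ be tame is local at finite primes, so it suffices to verify that $\Q_\ell(J_C[p])/\Q_\ell$ is tame for every rational prime $\ell$. My plan is to split this verification into three regimes.

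When $\ell = p$, tameness is exactly hypothesis (ii). When $\ell \neq p$ and $\ell \leqslant 2g+1$, hypothesis (i) gives good reduction of $C$ at $\ell$ and hence of its Jacobian $J_C$; the N\'eron--Ogg--Shafarevich criterion (valid since $\ell \neq p$) then shows that the inertia $I_\ell$ acts trivially on $J_C[p]$, so $\Q_\ell(J_C[p])/\Q_\ell$ is unramified, a fortiori tame.

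The remaining range $\ell \neq p$, $\ell > 2g+1$ is the delicate one, since $C$ may still have bad reduction. Here I would argue that tameness holds automatically in this regime. By the Deligne--Mumford stable reduction theorem, $C$ acquires stable reduction over some minimal finite Galois extension $L/\Q_\ell$; the uniqueness of the stable model forces $\Gal(L/\Q_\ell)$ to act faithfully on the stable special fibre, a curve of arithmetic genus $g$ over $\overline{\mathbb{F}_\ell}$. A Stichtenoth--Nakajima-style bound on $\ell$-subgroups of automorphism groups of genus-$g$ curves in characteristic $\ell$ then forces the $\ell$-Sylow subgroup of $\Gal(L/\Q_\ell)$ to be trivial once $\ell > 2g+1$, so the extension $L/\Q_\ell$ is itself tame. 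Over $L$, the Jacobian $J_C$ is semistable, so Grothendieck's $\ell$-adic monodromy theorem yields unipotent action of $I_L$ on $T_p J_C$ (using $p \neq \ell$); the image in $\Aut(J_C[p]) \cong \operatorname{GL}_{2g}(\fp)$ is thus a $p$-group, of order coprime to $\ell$. Composing these two tame steps, the full image of $I_\ell$ on $J_C[p]$ has order coprime to $\ell$, and $\Q_\ell(J_C[p])/\Q_\ell$ is tame.

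The main obstacle lies in the last case: one needs the sharp bound ensuring the $\ell$-Sylow of the stable-reduction Galois group vanishes for $\ell > 2g+1$, together with faithfulness of its action on the stable reduction. This is the step where the arithmetic feature of $C$ being a curve (rather than $J_C$ being a general abelian variety) is essential; the first two cases are routine consequences of N\'eron--Ogg--Shafarevich and hypothesis (ii).
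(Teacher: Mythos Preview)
Your three-case split and the handling of $\ell=p$ and $\ell\leqslant 2g+1$ match the paper exactly. For $\ell>2g+1$ the paper does not argue anything: it simply invokes the result of Serre--Tate \cite[p.~497]{ST68}, which works directly at the level of the abelian variety $J_C$ and shows that inertia at such $\ell$ always acts tamely on $J_C[p]$. Your route through Deligne--Mumford stable reduction of the \emph{curve} plus Grothendieck's monodromy theorem is a legitimate alternative, and it makes explicit why being a Jacobian (rather than an arbitrary abelian variety) helps; but the label ``Stichtenoth--Nakajima'' is not quite the right pointer. Those bounds concern $\ell$-subgroups of $\Aut$ of a \emph{smooth} curve and do not come out as the clean threshold $\ell>2g+1$; moreover the stable special fibre may be reducible with low-genus components, so you would need the bound for stable (not smooth) curves. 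The cleanest way to get exactly $\ell-1\leqslant 2g$ is to pass to the action on the $2g$-dimensional $H^1$ (equivalently, on the generalised Jacobian) and use that the characteristic polynomial of a finite-order automorphism lies in $\Z[x]$ and is divisible by $\Phi_\ell$ --- which is precisely the Serre--Tate argument the paper cites. So your sketch is correct in outline, but once made precise it collapses back into the paper's reference.
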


\begin{proof}
	We have to show that $\Q(J_C[p])/\Q$ is tamely ramified at $\ell$ for all primes $\ell \neq p$. If $\ell \leqslant 2g+1$ then $C$, and hence $J_C$, has good reduction at $\ell$ by assumption. The criterion of N\'{e}ron--Ogg--Shafarevich then implies that $\Q(J_C[p])/\Q$ is unramified at $\ell$. On the other hand, if $\ell>2g+1$, then the extension is always tamely ramified at $\ell$ by a result of Serre--Tate \cite[p.497]{ST68}.
\end{proof}

It remains to provide a condition at $p$ to ensure tameness. We do this using endomorphism algebras through the following result.

\begin{theorem}[=Theorem \ref{endpf}]
\label{endthm}
	Fix a positive integer $g$ and rational prime $p$. Choose $n \in \{2g+1,2g+2\}$ such that $p \nmid n$. Let $C_n/\Q$ be the hyperelliptic curve $y^2=x^n-1$ with Jacobian $J_n$. Then $\Q_p(J_n[p])/\Q_p$ is tamely ramified.
\end{theorem}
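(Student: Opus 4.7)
My plan is to exploit the cyclic automorphism $\sigma: (x,y) \mapsto (\zeta_n x, y)$ of $C_n$, which is defined over $\Q(\zeta_n)$ and induces an embedding $\Z[\zeta_n] \hookrightarrow \End_{\Q(\zeta_n)}(J_n)$. Together with the theory of complex multiplication (recapped in \S\ref{CMS}), this endomorphism structure will constrain the inertia at $p$ to act on $J_n[p]$ through a group of order coprime to $p$.

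First, since $p \nmid n$, the extension $\Q_p(\zeta_n)/\Q_p$ is unramified, so it suffices to establish that $\Q_p(\zeta_n)(J_n[p])/\Q_p(\zeta_n)$ is tame: an unramified-then-tame tower is tame. Working over $\Q_p(\zeta_n)$, the action of $\Gal(\overline{\Q_p}/\Q_p(\zeta_n))$ on $J_n[p]$ commutes with the $\Z[\zeta_n]$-action. Since $p \nmid n$, the prime $p$ is unramified in $\Z[\zeta_n]$, so
$$
\Z[\zeta_n]/p \;\cong\; \prod_{\mathfrak{p} \mid p} k_\mathfrak{p}
$$
is a product of finite fields of characteristic $p$, each of whose multiplicative group has order coprime to $p$.

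Next, invoking the CM results of \S\ref{CMS}--\S\ref{EndS}, I would expect $J_n[p]$ to decompose as a direct sum of one-dimensional $k_\mathfrak{p}$-subspaces, each stable under Galois. The representation then splits as a sum of characters $\chi_\mathfrak{p}: \Gal(\overline{\Q_p}/\Q_p(\zeta_n)) \to k_\mathfrak{p}^\times$. Since each target has order coprime to $p$, the wild inertia (being pro-$p$) must map trivially under every $\chi_\mathfrak{p}$. Hence inertia acts on $J_n[p]$ through a quotient of order coprime to $p$, and the extension is tame.

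The principal obstacle lies in verifying the decomposition in the third step---that $J_n[p]$ is really free of rank one over $\Z[\zeta_n]/p$ as a Galois-equivariant module. A generic abelian variety admitting a $\Z[\zeta_n]$-action need not have this property; for instance, the rank of $\Z[\zeta_n]$ is $\phi(n)$, which can be strictly less than $2g$ for $n \in \{2g+1, 2g+2\}$, so additional endomorphisms (or a decomposition of $J_n$ into CM factors) may be required. The proof must therefore use the explicit geometry of $C_n: y^2 = x^n - 1$ and work out the CM type carefully; this is precisely the content deferred to \S\ref{EndS}.
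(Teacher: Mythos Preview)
Your strategy coincides with the paper's: pass to $\cK=\qp(\zeta_n)$ (unramified over $\qp$ since $p\nmid n$), use the CM structure to force the Galois image into $(\OF\otimes\fp)^\times$, and observe that this group has order prime to $p$ because $p$ is unramified in $F$. The one correction concerns your opening claim: the automorphism $\sigma$ does \emph{not} yield an embedding $\Z[\zeta_n]\hookrightarrow\End(J_n)$ in general, because the minimal polynomial of $\sigma$ acting on $J_n$ is $\prod_{d\mid n,\,d>2}\Phi_d(t)$, not $\Phi_n(t)$. What one actually obtains is $\Z[\sigma]\cong\prod_{d\mid n,\,d>2}\Z[\zeta_d]$, and Theorem~\ref{end} shows this is already the full geometric endomorphism ring. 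This is precisely the ``decomposition into CM factors'' you anticipated: with $F=\prod_{d\mid n,\,d>2}\Q(\zeta_d)$ an \'etale $\Q$-algebra of dimension $2g$ and $\OF$ its maximal order, $J_n$ has CM by $\OF$, the rank obstruction you flagged disappears, and Lemma~\ref{CMlem} applies directly.
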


We are now in a position to prove Theorem \ref{mainthm}, assuming the above theorem. The remainder of the paper will then be dedicated to providing a proof of Theorem \ref{endthm}.

\begin{proof}[Proof of Theorem \ref{mainthm}]
	We will construct a family of hyperelliptic curves $C_f:y^2=f(x)$ of genus $g$ which satisfy this condition, where $\deg(f)=2g+2$. 
	
	For primes $\ell \leqslant 2g+1$, $\ell \neq p$ choose a polynomial $f_{\ell}(x) \in \Z[x]$ of degree $2g+2$ such that $C_{f_{\ell}}$ has good reduction at $\ell$. If $\ell$ is odd, then it suffices to choose $f_{\ell}$ such that its reduction $\mod{\ell}$ has the same degree and separable; if $f \equiv f_{\ell} \mod{\ell}$, then $C_f$ will also have good reduction at $\ell$. For $\ell=2$, explicit conditions for $f_2$ are given in \cite[Lemma 7.7]{AD19}; note that $f \equiv f_2 \mod{2}$ is insufficient but $f \equiv f_2 \mod{2^{2g+2}}$ will suffice in condition (ii) of \emph{loc. cit}.
	
	For $\ell=p$, choose $n \in \{2g+1,2g+2\}$ such that $p \nmid n$ and suppose $f \equiv x^n-1 \mod{p^N}$ where $N$ is as in Theorem \ref{kisin} (if necessary, change $C_n:y^2=x^n-1$ to an even degree model). Now let $f(x)$ be a squarefree polynomial of degree $2g+2$ satisfying the above congruence conditions and let $C_f:y^2=f(x)$. Then by Theorems \ref{kisin} and \ref{endthm}, $\qp(\Jac(C_f)[p])/\qp$ is tamely ramified. The result now follows from Lemma \ref{local}.	
\end{proof}

\begin{remark}
\label{sqfree}
	Observe that for $p \neq 2$, the hyperelliptic curve $C_n$ has good reduction at $p$ for $p \nmid n$. Hence it is possible to extend this approach to constructing curves $C/\Q$ such that $\Q(J_C[m])/\Q$ is tame whenever $m$ is odd and squarefree. We do not prove this more general statement here however but instead refer the reader to \cite{BD19}.
\end{remark}

\section{Abelian varieties with complex multiplication}
\label{CMS}

In order to obtain a sufficiently large endomorphism ring, we consider abelian varieties with complex multiplication. In this section, we will provide the relevant background and show how this can give us the tame torsion result we want; in the next section will realise this as the Jacobian of a curve.

\begin{definition}
Let	$A/\qp$ be an abelian variety of dimension $g$. Let $F$ be an \'{e}tale $\Q$-algebra of dimension $2g$. We say that $A$ has complex multiplication by an order $R \subset F$ if $$\End_{\overline{\qp}}(A) \cong R.$$
\end{definition}

Note that if $A/\qp$ is absolutely simple, then $F$ is a number field and $R$ has finite index inside the ring of integers $\OF$. In general $F=\prod F_i$ will be a finite product of number fields, so we will write $\OF=\prod \mathcal{O}_{F_i}$ for the maximal order of $F$ and will restrict ourselves to the setting where $R=\OF$.

The benefit of having lots of endomorphisms is that it heavily restricts the possible choices of $\qp(A[p])$; in fact, the Galois group of the $p$-torsion field is abelian once all the endomorphisms are defined over the base field of $A$.

\begin{theorem}
\label{image}
	Let $\cK/\qp$ be a finite extension and let $A/\cK$ be an abelian variety with complex multiplication by the maximal order $\OF \subset F$. Suppose that all endomorphisms of $A$ are defined over $\cK$, i.e. $\End_{\cK}(A)=\End_{\overline{\qp}}(A)$. Let $p$ be a prime and let $\rho$ be the mod $p$ Galois representation of $A/\cK$. Then $\im(\rho) \subset (\OF \otimes_{\Z} \fp)^{\times}$.
\end{theorem}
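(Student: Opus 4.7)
The plan is to use the hypothesis $\End_\cK(A) = \End_{\overline{\qp}}(A) = \OF$ to force every Galois automorphism of $A[p]$ to commute with the $\OF$-action, so that $\rho$ takes values in the commutant of $\OF$ on $A[p]$. Since $p$ kills $A[p]$, the $\OF$-action factors through $\OF \otimes_\Z \fp$, and hence $\rho$ actually lands in $\Aut_{\OF \otimes \fp}(A[p])$. Everything then reduces to showing that $A[p]$ is free of rank one as a module over $\OF \otimes_\Z \fp$, for then $\Aut_{\OF \otimes \fp}(A[p]) = (\OF \otimes \fp)^\times$.

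To establish this freeness, I would lift to the $p$-adic Tate module $T_p(A) = \varprojlim_n A[p^n]$. Since $\OF$ is the maximal order in the \'{e}tale $\Q$-algebra $F$ of dimension $2g$, one has $\OF \otimes_\Z \Z_p \cong \prod_{\mathfrak{p} \mid p} \mathcal{O}_{F_\mathfrak{p}}$, a product of complete discrete valuation rings. Both $T_p(A)$ and $\OF \otimes \Z_p$ are free $\Z_p$-modules of rank $2g$. After inverting $p$, the rational Tate module $V_p(A) = T_p(A) \otimes_{\Z_p} \qp$ carries a faithful action of $F \otimes \qp = \prod_{\mathfrak{p}} F_\mathfrak{p}$; splitting $V_p(A) = \bigoplus V_\mathfrak{p}$ via the idempotents, faithfulness forces each $V_\mathfrak{p} \neq 0$, whence $\dim_{F_\mathfrak{p}} V_\mathfrak{p} \geqslant 1$. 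Comparing $\qp$-dimensions, $\sum \dim_{F_\mathfrak{p}}(V_\mathfrak{p}) \cdot [F_\mathfrak{p} : \qp] = 2g = \sum [F_\mathfrak{p} : \qp]$, so every inequality is an equality and $V_p(A)$ is free of rank one over $F \otimes \qp$. Since $\OF \otimes \Z_p$ is a product of DVRs, the full $\OF \otimes \Z_p$-stable lattice $T_p(A)$ inside $V_p(A)$ is then itself free of rank one, and reducing modulo $p$ gives $A[p]$ free of rank one over $\OF \otimes \fp$.

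The only substantive step is the rank-one freeness of $V_p(A)$ over $F \otimes \qp$: one must combine faithfulness of the CM action with the semisimple decomposition of $F \otimes \qp$ and a $\qp$-dimension count to rule out any component of $V_p(A)$ being too large. Everything else, including the descent from the Tate module to $A[p]$ and the identification of the commutant, is formal once $\OF \otimes \Z_p$ is known to be a product of principal ideal rings.
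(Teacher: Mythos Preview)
Your proposal is correct and follows essentially the same route as the paper: both arguments reduce to the fact that $T_p(A)$ is a free $\OF \otimes_{\Z} \Z_p$-module of rank one, whence the Galois action, commuting with $\OF$, lands in $(\OF \otimes_{\Z} \Z_p)^\times$ and reduces to $(\OF \otimes_{\Z} \fp)^\times$. The paper simply cites Serre--Tate for this rank-one freeness, whereas you have spelled out the underlying dimension count on $V_p(A)$ and the descent from lattice to torsion; your added detail is sound.
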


\begin{proof}
This is essentially Corollary 2 of \cite[p.502]{ST68} in conjunction with the remark proceeding it. Our assumption that we have complex multiplication by $\OF$ implies that the $p$-adic Tate module $T_pA$ of $A$ is free $\OF \otimes_{\Z} \Z_p$-module of rank 1 and hence the image of the Galois representation attached to $T_pA$ is contained in $(\OF \otimes_{\Z} \Z_p)^{\times}$; tensoring with $\Z/p\Z$ yields the result.
\end{proof}

We now compute $\OF \otimes_{\Z} \fp$ explicitly in order to deduce information about the cardinality of the image of the mod $p$ representation.

\begin{lemma}
\label{decomp}
	Let $F$ be a number field and let $p$ be a prime. Then $$\OF \otimes_{\Z} \fp \cong \prod\limits_{v|p} \dfrac{\mathbb{F}_v[\epsilon]}{\epsilon^{e(v|p)}},$$ where $\mathbb{F}_v$ is the residue field of $\OF$ at $v$, $e(v|p)$ is the ramification degree and the product is over places $v$ of $F$ dividing $p$.
\end{lemma}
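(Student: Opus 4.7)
The plan is to reduce the question to a purely local one prime by prime, and then invoke the standard structure theory of complete DVRs of mixed characteristic to identify each local factor.

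First I would observe that, by the right-exactness of the tensor product, $\OF \otimes_{\Z} \fp \cong \OF/p\OF$. The prime factorisation $p\OF = \prod_{v|p} \mathfrak{p}_v^{e(v|p)}$ in the Dedekind domain $\OF$, together with the Chinese remainder theorem applied to the pairwise coprime ideals $\mathfrak{p}_v^{e(v|p)}$, yields
\[
\OF/p\OF \;\cong\; \prod_{v|p} \OF/\mathfrak{p}_v^{e(v|p)}.
\]
Next, for each $v\mid p$, the natural map $\OF/\mathfrak{p}_v^{e(v|p)} \to \OF_v/\pi_v^{e(v|p)} \OF_v$ is an isomorphism, where $\OF_v$ denotes the $v$-adic completion and $\pi_v$ a uniformiser; this is standard (completion and quotient by a power of the maximal ideal commute). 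So it remains to identify each local factor $\OF_v/\pi_v^{e(v|p)}$ with $\mathbb{F}_v[\epsilon]/\epsilon^{e(v|p)}$.

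For this local identification, the cleanest route is to use the structure of $\OF_v$ as an extension of $\Z_p$. Writing $W(\mathbb{F}_v)$ for the ring of Witt vectors (the maximal unramified subring), there is a presentation $\OF_v = W(\mathbb{F}_v)[\pi_v]$, where $\pi_v$ satisfies an Eisenstein polynomial $f(x) = x^{e(v|p)} + a_{e-1} x^{e-1} + \dots + a_0$ with $a_i \in pW(\mathbb{F}_v)$. Reducing modulo $p$, the Witt ring becomes $\mathbb{F}_v$, the coefficients $a_i$ vanish, and $f(x)$ reduces to $x^{e(v|p)}$; so
\[
\OF_v/p\OF_v \;\cong\; \mathbb{F}_v[x]/x^{e(v|p)} \;=\; \mathbb{F}_v[\epsilon]/\epsilon^{e(v|p)}.
\]
Since $p\OF_v = \pi_v^{e(v|p)} \OF_v$, this is exactly the desired description. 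Assembling the $v\mid p$ factors gives the lemma.

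The main potential obstacle is the local identification in the last step: one needs to know that the completion really has the Eisenstein-type presentation above, which is the standard but non-trivial structural fact for complete DVRs of mixed characteristic. (An alternative route is Cohen's structure theorem for complete Noetherian local rings: the ring $\OF_v/\pi_v^{e(v|p)}$ is Artinian, equicharacteristic $p$, and has residue field $\mathbb{F}_v$, so it contains a coefficient field $\mathbb{F}_v$; as an $\mathbb{F}_v$-algebra it is generated by the single element $\pi_v$ subject to $\pi_v^{e(v|p)}=0$, and a dimension count using the $(\pi_v)$-adic filtration shows the resulting surjection $\mathbb{F}_v[\epsilon]/\epsilon^{e(v|p)} \twoheadrightarrow \OF_v/\pi_v^{e(v|p)}$ is an isomorphism.)
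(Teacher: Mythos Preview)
Your argument is correct and follows exactly the route the paper intends: the paper's proof is a single line, ``This follows from the fact that $\OF \otimes_{\Z} \fp \cong \OF/p\OF$,'' and you have simply spelled out the standard steps (CRT along the primes above $p$, passage to the completion, and the Eisenstein/Cohen identification of each local piece) that the paper leaves implicit. There is no difference in approach, only in the level of detail.
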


\begin{proof}
This follows from the fact that $\OF \otimes_{\Z} \fp \cong \dfrac{\OF}{p\OF}$.
\end{proof}

\begin{lemma}
\label{CMlem}
	Let $A/\qp$ be an abelian variety with complex multiplication by the ring of integers $\OF$ of an \'{e}tale $\Q$-algebra $F=\prod F_i$ with $F_i$ number fields. Let $\cK/\qp$ be a finite extension such that $\End_{\cK}(A)=\End_{\overline{\qp}}(A)$. Suppose that:
	\begin{enumerate}
		\item $p$ is unramified in $F_i/\Q$ for all $i$;
		\item $\cK/\qp$ is tamely ramified.
	\end{enumerate}
	Then $\qp(A[p])/\qp$ is tamely ramified.
\end{lemma}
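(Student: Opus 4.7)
The plan is to combine Theorem \ref{image} with Lemma \ref{decomp}, exploiting the hypothesis that $p$ is unramified in each $F_i$ to get a very strong constraint on the image of the mod $p$ Galois representation over $\cK$, and then pass back down from $\cK$ to $\qp$ using that tameness is preserved under towers and subextensions.

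First, by Theorem \ref{image} applied to $A/\cK$ (using hypothesis (ii) only to legitimise working over $\cK$ via $\End_{\cK}(A)=\End_{\overline{\qp}}(A)$), the image of the mod $p$ Galois representation of $A/\cK$ sits inside $(\OF \otimes_{\Z} \fp)^{\times}$. Next I invoke Lemma \ref{decomp} for each factor $F_i$: since by hypothesis (i) the prime $p$ is unramified in each $F_i/\Q$, every ramification index $e(v|p)$ equals $1$, so the nilpotent parts $\epsilon^{e(v|p)}$ collapse and $\OF \otimes_{\Z} \fp$ is a finite product of residue fields $\mathbb{F}_v$, all of characteristic $p$. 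Its unit group is then a product of groups $\mathbb{F}_v^{\times}$, each of order $|\mathbb{F}_v|-1$, so $(\OF \otimes_{\Z} \fp)^{\times}$ has order prime to $p$.

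Consequently $\Gal(\cK(A[p])/\cK)$ has order prime to $p$. Since the wild inertia group of any finite Galois extension of a $p$-adic local field is the (unique) Sylow $p$-subgroup of its inertia group, a Galois extension of $\cK$ whose Galois group has order prime to $p$ is automatically tamely ramified; hence $\cK(A[p])/\cK$ is tame.

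Finally I combine this with hypothesis (ii) via the standard fact that a tower of tame extensions is tame: if $L/K$ and $M/L$ are both tame and $P(M/K)$ is the (pro-)$p$ wild inertia of $M/K$, then tameness of $L/K$ forces $P(M/K)$ to fix $L$, so $P(M/K) \subseteq P(M/L) = 1$. Applied to $\qp \subset \cK \subset \cK(A[p])$ this shows $\cK(A[p])/\qp$ is tame, and since tameness passes to subextensions the intermediate field $\qp(A[p])$ is tame over $\qp$. The only part requiring any care is confirming the tame-tower and tame-subextension properties, which are routine; the essential input is the unramifiedness of $p$ in $F$, which collapses the potentially wild $\epsilon$-nilpotents in $\OF \otimes \fp$.
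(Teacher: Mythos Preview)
Your argument is correct and follows essentially the same route as the paper: bound $\Gal(\cK(A[p])/\cK)$ inside $(\OF\otimes_{\Z}\fp)^{\times}$ via Theorem~\ref{image}, use Lemma~\ref{decomp} and the unramifiedness of $p$ to see this group has order prime to $p$, conclude $\cK(A[p])/\cK$ is tame, and then stack this on top of the tame extension $\cK/\qp$. One small slip: in your first paragraph you refer to the condition $\End_{\cK}(A)=\End_{\overline{\qp}}(A)$ as ``hypothesis (ii)'', but (ii) is the tameness of $\cK/\qp$; the endomorphism condition is part of the setup of the lemma, not one of the enumerated hypotheses.
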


\begin{proof}
	We prove the stronger statement that $\cK(A[p])/\qp$ is tamely ramified. By our assumption, it suffices to show that $\cK(A[p])/\cK$ is tamely ramified. Since $A$ has complex multiplication by $\OF$, $\Gal(\cK(A[p])/\cK)$ is contained in a subgroup of $(\OF \otimes_{\Z} \fp)^{\times}$ by Theorem \ref{image}. However, $\OF \otimes_{\Z} \fp$ is a product of finite fields of characteristic $p$ by Lemma \ref{decomp} since $p$ is unramified in $F_i$ for all $i$. Hence $(\OF \otimes_{\Z} \fp)^{\times}$ has order coprime to $p$.
	
	Therefore the order of $\Gal(\cK(A[p])/\cK)$ is also coprime to $p$ and hence the extension is tamely ramified at $p$ as the image of wild inertia is a $p$-group.
\end{proof}

\section{Constructing the endomorphism algebra}
\label{EndS}

We now construct a suitable curve whose Jacobian has complex multiplication in order to be able to apply the above results. We do this by explicitly computing the endomorphism ring for a particular class of curves; we closely follow the ideas in \cite[\S 4]{Zar05} but adapt them for a more general cyclic cover.

Throughout this section, we let $C_n/\qp: y^2=x^n-a$ be a hyperelliptic curve, $a \in \qp^{\times}$, and we will write $J_n$ for its Jacobian.

\begin{lemma}
	Let $C_n: y^2=x^n-a$ be a hyperelliptic curve, $a \in \qp^{\times}$, with Jacobian $J_n$. Let $\delta_n$ denote both the $\overline{\qp}$-automorphism of $C_n$ given by $(x,y) \mapsto (\zeta_nx,y)$ and the corresponding automorphism of $J_n$ given by Albanese functoriality. Then the subring $\mathbb{Z}[\delta_n] \subset \End_{\overline{\qp}}J_n$ is isomorphic to $\mathbb{Z}[t]/P_n(t)$, where $P_n(t) \in \mathbb{Z}[t]$ be the polynomial defined by $$P_n(t)=\begin{cases} \dfrac{t^n-1}{t-1}=t^{n-1}+\cdots+t+1 \qquad &\text{if $n$ is odd}; \\[6pt] \dfrac{t^n-1}{t^2-1}=t^{\frac{n}{2}-1}+\cdots+t+1 \qquad &\text{if $n$ is even}.
\end{cases}$$
\end{lemma}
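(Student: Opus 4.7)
The plan is to identify the minimal polynomial of the endomorphism $\delta_n$ of $J_n$; since this minimal polynomial will turn out to be $P_n(t)$ (monic of degree $2g$), the subring $\mathbb{Z}[\delta_n]$ will then be a free $\mathbb{Z}$-module with basis $1, \delta_n, \ldots, \delta_n^{2g-1}$, and hence isomorphic to $\mathbb{Z}[t]/P_n(t)$. First, since $\delta_n^n$ acts as the identity on $C_n$, by Albanese functoriality it acts as the identity on $J_n$, so the minimal polynomial of $\delta_n$ divides $t^n - 1 = \prod_{d\mid n}\Phi_d(t)$ and is in particular a squarefree product of cyclotomic polynomials. It therefore suffices to determine the set of eigenvalues of $\delta_n$ acting on the rational Tate module $V_\ell J_n := T_\ell J_n \otimes \Q_\ell$ for some $\ell \neq p$.

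To compute these eigenvalues I would use the standard basis of regular differentials $\omega_k = x^{k-1}\,dx/y$, $k = 1, \ldots, g$, of $H^0(C_n,\Omega^1)$ (available for both $n = 2g+1$ and $n = 2g+2$). Since $\delta_n^* x = \zeta_n x$ and $\delta_n^* y = y$, a direct computation gives $\delta_n^*\omega_k = \zeta_n^k \omega_k$, so the eigenvalues of $\delta_n$ on $H^0(C_n,\Omega^1)$ are $\zeta_n, \zeta_n^2, \ldots, \zeta_n^g$. Fixing an embedding $\overline{\qp}\hookrightarrow\C$ and using the Hodge decomposition $V_\ell J_n \otimes_{\Q_\ell}\C \cong H^0(J_n,\Omega^1) \oplus \overline{H^0(J_n,\Omega^1)}$ together with the standard comparison isomorphism for abelian varieties in characteristic zero, the eigenvalues of $\delta_n$ on $V_\ell J_n$ are $\{\zeta_n^{\pm k} : 1 \leqslant k \leqslant g\}$.

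For $n = 2g+1$ odd, the exponents $\pm k \bmod n$ for $1 \leqslant k \leqslant g$ run through all of $\{1, \ldots, n-1\}$, so the characteristic polynomial of $\delta_n$ on $V_\ell J_n$ is $\prod_{k=1}^{n-1}(t-\zeta_n^k) = (t^n-1)/(t-1) = P_n(t)$. For $n = 2g+2$ even, they run through $\{1, \ldots, n-1\}\setminus\{n/2\}$, giving the characteristic polynomial $(t^n-1)/((t-1)(t+1)) = P_n(t)$. Since $P_n$ is squarefree and annihilates $\delta_n$ and has the same roots as the minimal polynomial, the minimal polynomial equals $P_n$, proving the claim.

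The main obstacle is the transition from the differential-theoretic eigenvalue calculation on $H^0(\Omega^1)$ to a statement about eigenvalues on $V_\ell J_n$: it requires a careful choice of embedding $\overline{\qp}\hookrightarrow\C$, appealing to the comparison between de Rham and étale cohomology, and an honest bookkeeping of Albanese versus Picard functoriality (so that the eigenvalues are genuinely $\zeta_n^k$ rather than $\zeta_n^{-k}$, although the unordered set of roots is unchanged by this ambiguity). Verifying that $\{x^{k-1}dx/y : 1\leqslant k \leqslant g\}$ really is a basis of $H^0(C_n,\Omega^1)$ in both the odd- and even-degree cases is routine and would just be cited.
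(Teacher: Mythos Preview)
Your argument is correct, and it shares with the paper the core computation of the eigenvalues of $\delta_n^*$ on $H^0(C_n,\Omega^1)$ via the standard basis $x^{k-1}\,dx/y$. Where you diverge is in how you pass from this $g$-dimensional calculation to the minimal polynomial in $\End(J_n)$. The paper stays on the cotangent side: it uses the Abel--Jacobi map based at the $\delta_n$-fixed point $(0,\sqrt{a})$ to identify $\Omega^1(J_n)\cong\Omega^1(C_n)$ compatibly with $\delta_n$, and then (implicitly) invokes the faithfulness of $\End_{\overline{\qp}}(J_n)\to\End(\Omega^1(J_n))$ in characteristic zero together with the observation that the minimal \emph{rational} polynomial annihilating the eigenvalue set $\{\zeta_n,\ldots,\zeta_n^g\}$ is already $P_n$. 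You instead pass to $V_\ell J_n$, use a complex embedding and the Hodge decomposition to double the eigenvalue set to $\{\zeta_n^{\pm k}:1\le k\le g\}$, and read off $P_n$ directly as the characteristic polynomial on a $2g$-dimensional space.

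Your route invokes more machinery (comparison isomorphisms, a choice of $\overline{\qp}\hookrightarrow\C$) but makes the appearance of the degree-$2g$ polynomial transparent and bypasses the slightly delicate check that each cyclotomic factor $\Phi_d$ with $d\mid n$, $d>2$ already has a root among $\zeta_n,\ldots,\zeta_n^g$. The paper's route is lighter on machinery but leans on that check and on the characteristic-zero faithfulness of the tangent-space representation, neither of which it spells out. Your remark that the Albanese/Picard ambiguity only permutes the unordered root set is exactly the right way to dispose of that bookkeeping issue.
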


\begin{proof}
	Let $g$ be the genus of $C_n$ and let $\Omega^1(C_n)$ be the vector space of differentials of the first kind of $C_n$. Recall that a basis of $\Omega^1(C_n)$ is given by $\omega_i=x^i\frac{dx}{y}$, $0\leqslant i \leqslant g-1$. Moreover, these are eigenvectors for the corresponding automorphism $\delta_n^*$ on $\Omega^1(C_n)$ (given by functoriality) with eigenvalues $\zeta_n^{i+1}$; note that these are never $1$ nor $-1$ as $2g<n$.
	
	One can now check that $P_n(\delta_n^*)=0$ in $\End(\Omega^1(C_n))$ and moreover that $P_n$ is the minimal polynomial for $\delta_n^*$ in $\Q[t]$.
	
	Fix $P_0=(0,\sqrt{a}) \in C_n(\overline{\qp})$ which is $\delta_n$-invariant and define the Abel--Jacobi map $\AJ:C_n \rightarrow J_n$ using $P_0$. The induced map $\AJ^*:\Omega^1(J_n) \rightarrow \Omega^1(C_n)$ is then an isomorphism which commutes with $\delta_n$. Hence $\delta_n$ and $\delta_n^*$ have the same minimal polynomial $P_n(t)$ in their respective endomorphism rings. 
\end{proof}

\begin{theorem}
\label{end}
	Let $C_n/\qp: y^2=x^n-a$ be a hyperelliptic curve, $a \in \qp^{\times}, n \geqslant 3$, with Jacobian $J_n$. Then $$\End_{\overline{\qp}} (J_n) \cong \prod\limits_{\substack{d \mid n \\ d>2}} \Z[\zeta_d].$$
\end{theorem}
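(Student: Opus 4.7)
My approach is to decompose $J_n$ up to isogeny using the $\delta_n$-action and then identify each isogeny factor as a simple CM abelian variety with maximal-order endomorphism ring. The previous lemma supplies the embedding $\Z[\delta_n] \cong \Z[t]/P_n(t) \hookrightarrow \End_{\overline{\qp}}(J_n)$. A direct check, carried out separately for each parity of $n$, gives the factorization $P_n(t) = \prod_{d \mid n,\, d > 2} \Phi_d(t)$ into distinct irreducible cyclotomic polynomials, so tensoring with $\Q$ yields an embedding of \'etale $\Q$-algebras $E := \prod_{d \mid n,\, d > 2} \Q(\zeta_d) \hookrightarrow \End^0_{\overline{\qp}}(J_n)$. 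Using $\sum_{d \mid n}\varphi(d) = n$ one verifies $\dim_\Q E = 2g$ in both parities, so $E$ is a maximal commutative \'etale subalgebra of $\End^0(J_n)$.

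The orthogonal idempotents of $E$ then yield an isogeny decomposition $J_n \sim \prod_d J_n^{(d)}$ with $\dim J_n^{(d)} = \varphi(d)/2$. Each $J_n^{(d)}$ carries a faithful $\Z[\zeta_d]$-action, and since $[\Q(\zeta_d):\Q] = 2 \dim J_n^{(d)}$ it is of CM type. The key step is to prove each $J_n^{(d)}$ is absolutely simple; combined with the Albert classification, simplicity forces $\End^0_{\overline{\qp}}(J_n^{(d)}) = \Q(\zeta_d)$, and since $\Z[\zeta_d]$ is the ring of integers (hence the maximal order) of $\Q(\zeta_d)$, we obtain $\End_{\overline{\qp}}(J_n^{(d)}) = \Z[\zeta_d]$. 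As the factors have pairwise non-isomorphic CM fields, they are pairwise non-isogenous, so the endomorphism rings assemble to give the claimed product decomposition of $\End_{\overline{\qp}}(J_n)$.

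Simplicity of $J_n^{(d)}$ reduces to showing that its CM type is \emph{primitive}, i.e.\ not stabilised by any nontrivial subgroup of $\Gal(\Q(\zeta_d)/\Q) \cong (\Z/d\Z)^\times$ fixing a proper CM subfield. By the previous lemma the eigenvalues of $\delta_n^*$ on $\Omega^1(C_n)$ are $\zeta_n^{i+1}$ for $0 \leqslant i \leqslant g-1$, and restricting to the primitive $d$-th roots identifies the CM type as
\[
 S_d = \{k \in (\Z/d\Z)^\times : 1 \leqslant (n/d)k \leqslant g\},
\]
namely the classes with representative in the `lower half' of $(\Z/d\Z)^\times$. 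Verifying primitivity of $S_d$ is the main obstacle and is a concrete combinatorial check: note first that multiplication by $-1$ interchanges $S_d$ with its complement, so any stabiliser lies in an `orientation-preserving' subgroup, and then rule out nontrivial such subgroups by reducing to prime-power $d$ and analysing the subgroup structure of $(\Z/d\Z)^\times$ directly.
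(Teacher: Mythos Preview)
Your approach is more careful than the paper's, which after noting $\dim_\Q \Q[\delta_n]=2g$ simply declares that $\Z[\delta_n]$ is the maximal order of $F$ and concludes; that tacitly assumes $\End^0_{\overline{\qp}}(J_n)=F$, which is precisely the point at issue. You correctly recognise that the \emph{equality} (as opposed to the containment coming from the previous lemma) requires each factor $J_n^{(d)}$ to be absolutely simple, equivalently that the CM type $S_d$ be primitive.

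The gap is that your primitivity claim is false in general, and with it the theorem as stated. Take $n=d=8$: then $g=3$ and $S_8=\{k\in(\Z/8\Z)^\times:1\leqslant k\leqslant 3\}=\{1,3\}$. Multiplication by $3$ stabilises $S_8$ (since $3\cdot 3\equiv 1$), so $S_8$ is invariant under the subgroup $\{1,3\}\subset(\Z/8\Z)^\times$, whose fixed field is the CM field $\Q(\sqrt{-2})$ (indeed $\zeta_8+\zeta_8^3=\sqrt{-2}$). Hence $S_8$ is induced from $\Q(\sqrt{-2})$, the factor $J_8^{(8)}$ is isogenous to $E^2$ for an elliptic curve $E$ with CM by $\Q(\sqrt{-2})$, and $\End^0_{\overline{\qp}}(J_8)\cong M_2(\Q(\sqrt{-2}))\times\Q(i)\supsetneq\Q(\zeta_8)\times\Q(i)$. (Likewise $n=d=12$ gives $S_{12}=\{1,5\}$, stable under $\{1,5\}$ with fixed field $\Q(i)$.) The combinatorial check you outline therefore cannot succeed. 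What the paper actually needs downstream (Theorem~\ref{endpf} via Theorem~\ref{image}) is only that $T_pJ_n$ is free of rank one over $\OF\otimes_\Z\Z_p$; for this the \emph{embedding} $F\hookrightarrow\End^0_{\overline{\qp}}(J_n)$ from your first paragraph, together with $p\nmid[\OF:\Z[\delta_n]]$ (which holds when $p\nmid n$, since that index is supported on primes dividing $n$), already suffices.
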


\begin{proof}
	By the above lemma, $\Z[\delta_n] \cong \prod\limits_{\substack{d \mid n \\ d>2}} \Z[\zeta_d]$ so it just remains to prove the equality. Note that $J_n$ is an abelian variety with complex multiplication by $F=\Q[\delta_n] \cong \prod\limits_{\substack{d \mid n \\ d>2}} \Q(\zeta_d)$ (this has dimension $2g$ over $\Q$). Since $\Z[\delta_n]$ is the maximal order of $F$, we have equality.
\end{proof}

We are now finally in a position to prove Theorem \ref{endthm}.

\begin{theorem}[=Theorem \ref{endthm}]
\label{endpf}
	Fix a positive integer $g$ and rational prime $p$. Choose $n \in \{2g+1,2g+2\}$ such that $p \nmid n$. Let $C_n/\Q$ be the hyperelliptic curve $y^2=x^n-1$ with Jacobian $J_n$. Then $\Q_p(J_n[p])/\Q_p$ is tamely ramified.
\end{theorem}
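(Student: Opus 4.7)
The plan is to combine Theorem \ref{end} with Lemma \ref{CMlem}, so the proof reduces to identifying a tamely ramified base field $\cK/\qp$ over which all endomorphisms of $J_n$ are defined, and checking that $p$ is unramified in each CM field factor.

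First I would specialise the setup of \S\ref{EndS} with $a=1$, so $C_n:y^2=x^n-1$. By Theorem \ref{end}, $J_n$ has complex multiplication by the maximal order of the \'etale $\Q$-algebra $F=\prod_{d\mid n,\,d>2}\Q(\zeta_d)$, and this CM is generated (as an order) by the single automorphism $\delta_n:(x,y)\mapsto(\zeta_n x,y)$ on $C_n$. Thus every geometric endomorphism of $J_n$ is already defined over the field $\cK:=\qp(\zeta_n)$, giving $\End_{\cK}(J_n)=\End_{\overline{\qp}}(J_n)$.

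Next I would verify the two hypotheses of Lemma \ref{CMlem} for this $\cK$. For (ii), since $p\nmid n$ the cyclotomic extension $\qp(\zeta_n)/\qp$ is unramified, hence \emph{a fortiori} tamely ramified. For (i), each number field factor of $F$ is of the form $\Q(\zeta_d)$ for some divisor $d\mid n$ with $d>2$; as $p\nmid n$ forces $p\nmid d$, the rational prime $p$ is unramified in $\Q(\zeta_d)/\Q$ for every such $d$.

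With both hypotheses of Lemma \ref{CMlem} verified, the lemma directly yields that $\qp(J_n[p])/\qp$ is tamely ramified, completing the proof. There is no real obstacle here: the work was done in setting up Theorem \ref{end} and Lemma \ref{CMlem}, and the only remaining subtlety is to ensure that the chosen base field $\cK$ is simultaneously tame over $\qp$ and contains every geometric endomorphism of $J_n$; the uniform choice $\cK=\qp(\zeta_n)$ handles both at once precisely because $\delta_n$ is the only non-trivial generator needed and $p\nmid n$.
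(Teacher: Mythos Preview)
Your proof is correct and follows essentially the same route as the paper: identify the CM algebra via Theorem~\ref{end}, take $\cK=\qp(\zeta_n)$ as the field of definition of the endomorphisms, and verify the two hypotheses of Lemma~\ref{CMlem} using $p\nmid n$. The only cosmetic difference is that you note $\qp(\zeta_n)/\qp$ is in fact unramified (not merely tame), and you omit the remark that a valid choice of $n$ always exists---but that is already built into the statement.
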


\begin{proof}
	First note that such a choice of $n$ is always possible. Now the endomorphism algebra is $F=\prod\limits_{\substack{d \mid n \\ d>2}} \Q(\zeta_d)$ and note that each subfield of $F$ is contained in $\Q(\zeta_n)$. As $p \nmid n$, $p$ is unramified in $\Q(\zeta_n)$.
	
	Moreover, we can see by construction that all endomorphisms are defined over $\cK=\qp(\zeta_n)$ which is tamely ramified. The theorem now follows from Lemma \ref{CMlem}.
\end{proof}

\section{The tame inverse Galois problem}

We finally focus on the tame inverse Galois problem using the mod $p$ representation of an abelian variety to realise $\GSp_{2g}(\fp)$. Since we have restricted our attention to hyperelliptic curves, the image of the mod $2$ representation is contained $S_{2g+2}$ and hence cannot be surjective for $g \geqslant 3$; for this reason we assume $p$ to be odd.

The approach will be to show that the set of hyperelliptic curves which satisfy our conditions has positive density thus proving existence. For readability, we quickly define the notion of density.

\begin{definition}
Let $B \subset \Z^n$. The density of $B$ is $$\lim\limits_{X \rightarrow \infty} \dfrac{\# \{(a_1,\cdots, a_n) \in B : |a_i| \leqslant X \text{ for all } 1\leqslant i \leqslant n \}}{\# \{(a_1,\cdots, a_n) \in \Z^n : |a_i| \leqslant X \text{ for all } 1\leqslant i \leqslant n \}},$$ if the limit exists.
\end{definition}

\begin{theorem}
\label{tameIGP}
	Fix a positive integer $g$ and odd prime $p$. There exists a nice curve $C/\Q$ of genus $g$ such that the extension $\Q(J_C[p])/\Q$ is tame, and $\Gal(\Q(J_C[p])/\Q) \cong \GSp_{2g}(\fp)$.
\end{theorem}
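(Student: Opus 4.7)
The plan is to combine the congruence conditions from the proof of Theorem \ref{mainthm} with a generic big monodromy result for hyperelliptic curves, and observe that both are defined by conditions holding on positive density sets of coefficient vectors.

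First I would parametrise hyperelliptic curves $C_f : y^2 = f(x)$ of genus $g$ by the coefficients of $f \in \Z[x]$ of degree $2g+2$, viewed as elements of $\Z^{2g+3}$. From the proof of Theorem \ref{mainthm}, the tameness of $\Q(J_{C_f}[p])/\Q$ is guaranteed by finitely many congruence conditions on $f$: at each prime $\ell \leqslant 2g+1$ with $\ell \neq p$ one requires a congruence mod $\ell^{N_\ell}$ forcing good reduction, while at $p$ one requires $f \equiv x^n - 1 \pmod{p^{N_p}}$ for a suitable $n \in \{2g+1, 2g+2\}$ coprime to $p$, ensuring via Theorems \ref{kisin} and \ref{endpf} that $\qp(J_{C_f}[p])/\qp$ is tame. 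By the Chinese remainder theorem, the set $\mathcal{T} \subset \Z^{2g+3}$ of such $f$ has positive density in the sense of the definition above.

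Next I would invoke a big monodromy density result: the set $\mathcal{S} \subset \Z^{2g+3}$ of squarefree $f$ of degree $2g+2$ for which $\Gal(\Q(J_{C_f}[p])/\Q) \cong \GSp_{2g}(\fp)$ has density $1$. This rests on the classical fact that the geometric monodromy of the relative $p$-torsion local system over the moduli space of genus $g$ hyperelliptic curves is $\Sp_{2g}(\fp)$ for odd $p$, together with a Hilbert irreducibility / large sieve argument in the style of Chavdarov, Hall and Kowalski to obtain arithmetic surjectivity onto $\GSp_{2g}(\fp)$; here the similitude character is supplied by the mod $p$ cyclotomic character via the Weil pairing, which is itself surjective onto $\fp^{\times}$.

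Since $\mathcal{T}$ has positive density and $\mathcal{S}$ has density $1$, the intersection $\mathcal{T} \cap \mathcal{S}$ has positive density and hence is nonempty; any $f$ in the intersection yields a curve $C_f$ with the desired properties. The main obstacle is locating and invoking the big monodromy density result in exactly the right form over $\Q$ for this coefficient parametrisation, and in particular the passage from geometric monodromy $\Sp_{2g}(\fp)$ to arithmetic monodromy $\GSp_{2g}(\fp)$; once that is in place, the tame congruence conditions interact cleanly with it because they only cut out a positive density subset and impose no constraint on the generic surjectivity statement.
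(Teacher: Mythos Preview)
Your proposal is correct and follows essentially the same approach as the paper: both combine the congruence conditions from Theorem~\ref{mainthm} (positive density) with a density-one surjectivity statement coming from big monodromy plus Hilbert irreducibility, then intersect. The paper supplies the specific references you flag as the ``main obstacle'': the geometric monodromy $\Sp_{2g}(\fp)$ is taken from A'Campo \cite[Th\'{e}or\`{e}me~1(1)]{AC79}, and the density-one step is Hilbert irreducibility as in Serre \cite[p.~123]{Ser97} rather than a large-sieve argument, but otherwise your outline matches.
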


\begin{proof}
First identify $\mathbb{A}_{\Q}^{2g+3}$ with the set of polynomials of degree at most $2g+2$ via $a=(a_0,...,a_{2g+2}) \mapsto f_a:=\sum\limits_{i=0}^{2g+2}a_ix^i$. Now let $U \subset \mathbb{A}_{\Q}^{2g+3}$ be the open variety such that $f_a$ is separable of degree $2g+2$. We then have a smooth family of hyperelliptic curves $\cC \rightarrow U$, where the fibre above $a \in U$ is the curve $\cC_a: y^2=f_a(x)$. Let $\mathcal{J} \mapsto U$ be the abelian scheme corresponding to the Jacobian, i.e. the fibre above $a$, $\cJ_a$, is the Jacobian of $\cC_a$.

Observe that the $p$-torsion subscheme $\mathcal{J}[p]$ is an \'{e}tale sheaf over $U$ and defines a representation of the \'{e}tale fundamental group $$\rho: \pi^{\acute{e}t}_1(U,u_0) \rightarrow \GSp_{2g}(\fp),$$
where $u_0$ is a fixed basepoint for $U$.\footnote{See for example \cite[\S4.6]{Sza09} for background on the \'{e}tale fundamental group or \cite[\S2]{Yel15} for a description of the analogous representation for the topological fundamental group in the complex setting.} We claim $\rho$ is surjective.

Indeed, by the Weil pairing, it suffices to show that $\rho(\pi^{\acute{e}t}_1(U_{\C},u_0)) = \Sp_{2g}(\fp)$, where $U_{\C}$ is the base change of $U$ to $\C$. Since $p$ is odd, this follows from \cite[Th\'{e}or\`{e}me 1(1)]{AC79}; A'Campo uses the topological fundamental group but we note that the \'{e}tale fundamental group is isomorphic to the profinite completion of the topological one in the complex setting by Riemann's existence theorem.

By Hilbert's irreducibility theorem, the set of polynomials $f_a$ with integer coefficients (corresponding to points $a \in U(\Q) \cap \Z^{2g+3}$) such that the specialisation of $\rho$ at $a$ induces an isomorphism $\Gal(\Q(\cJ_a[p])/\Q) \cong \GSp_{2g}(\fp)$ has density $1$ inside $\Z^{2g+3}$ (see for example \cite[p.123]{Ser97}). From the proof of Theorem \ref{mainthm}, we see that there is a congruence class of polynomials $f_a$ such that $\Q(\cJ_a[p])/\Q$ is a tame extension and hence the intersection of these two sets has positive density in $\Z^{2g+3}$ which proves the result.
\end{proof}

\begin{remark}
It is possible to extend the above proof to realise $\GSp_{2g}(\Z/m\Z)$ by looking at $m$-torsion instead when $m$ is odd and squarefree. Indeed, tameness is done similarly (cf. Remark \ref{sqfree}) and a refinement of Hilbert's irreducibility theorem enables us to ensure the $p$-torsion fields are disjoint for all $p \mid m$.
\end{remark}

\bibliographystyle{alpha}
\bibliography{endos}
\end{document}